 \theoremstyle{plain}
\newtheorem{theorem}{Theorem}[section]
\newtheorem{lemma}[theorem]{Lemma}
\theoremstyle{definition}
\newtheorem{corollary}[theorem]{Corollary}
  \newcommand\dd{\,\mathrm{d}}
 \newcommand{\Ric}{\mbox{Ric}}
\theoremstyle{remark}
\newtheorem{remark}[theorem]{Remark}
\numberwithin{equation}{section}
\begin{document}

\title[Negatively curved  metrics] 
 {The existence of negatively curved  metrics on locally conformally flat manifolds with boundary}


 \author{Rirong Yuan }
 \address{School of Mathematics, South China University of Technology, Guangzhou 510641, China}
 \email{yuanrr@scut.edu.cn}



\dedicatory{}

\begin{abstract}
	
We use certain Morse functions to construct conformal metrics with negative sectional curvature on locally conformally flat manifolds with boundary. 
	Moreover, without conformally flatness assumption, we also construct conformal metric of positive Einstein tensor.
\end{abstract}

\maketitle


  
  \section{Introduction}

  Let $(\bar M^n,g)$ be a connected compact  Riemannian manifold of dimension $n\geq 3$  with non-empty smooth  boundary $\partial M$,  $\bar M=M\cup\partial M$. Here  $M$ denotes the interior  of $\bar M$.
 Let  $K_g$, ${Ric}_g$ and   ${R}_g$ denote the sectional,  Ricci and scalar curvature of  $g$, respectively, with respect to the  Levi-Civita connection $\nabla$. Denote the Einstein tensor by $$G_g=Ric_g-\frac{R_g}{2}\cdot g.$$   
 The Riemannian curvature tensor $\mathrm{Riem}_g$ 
  can be decomposed as follows  (cf. \cite{Besse1987}):
 \begin{equation}
 	\label{decomposion1-0}
 	\begin{aligned}
 		\mathrm{Riem}_g= {W}_g+ A_g \odot g,  
 	\end{aligned}
 \end{equation}  
 where $W_g$ is the Weyl curvature tensor of $g$, 
 and $\odot$ stands for the Kulkarni-Nomizu product, as well as $A_g$ denotes the Schouten tensor 
 \[A_g=\frac{1}{n-2}\Big(Ric_g-\frac{R_g}{2(n-1)} g\Big).\]
  
  In Riemannian geometry, 
  a basic problem is to find a metric so that the various curvatures satisfy prescribed properties. A well-known result 
   on this direction is 
  due to 
  Gao-Yau \cite{Gao1986Yau} who proved that any closed 3-manifold admits a Riemannian metric with negative Ricci curvature. Subsequently, Gao-Yau's theorem was  extended
   by Lohkamp
  \cite{Lohkamp-1,Lohkamp-2} to higher dimensional manifolds possibly with boundary. 
Sectional curvature seemingly makes the problem harder.
On every  3-manifold with   boundary,
 Hass \cite{Hass1994} constructed   a metric
 such that with respect to the metric the manifold has negative sectional	curvature and the boundary is   concave outwards.  More recently, based on a Morse theory technique that was introduced by  \cite{yuan-PUE1}, the author  \cite{yuan-PUE3-construction} 
  constructed  negatively curved metrics in each conformal class of 3-manifolds with boundary. That is


 	 		\begin{theorem}
 	 			[{\cite[Theorem 1.1]{yuan-PUE3-construction}}]
 	 			\label{thm0-main-dimension3}
 	 			Let $(\bar M,g)$ be a 3-dimensional compact connected  Riemannian manifold with smooth boundary. There is a smooth compact  conformal 
 	 			metric $g_u=e^{2u}g$ of negative sectional curvature.
 	 			
 	 		\end{theorem}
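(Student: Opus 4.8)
The plan is to exploit that in dimension three the Weyl tensor vanishes, so that the full curvature, and in particular the sectional curvature, is governed entirely by the Schouten tensor, and then to manufacture the conformal factor from a function without critical points. First I would record the pointwise reduction. Since $n=3$, \eqref{decomposion1-0} gives $\mathrm{Riem}_g=A_g\odot g$, whence for any $g$-orthonormal pair $e_i,e_j$ the sectional curvature of the plane they span equals $A_g(e_i,e_i)+A_g(e_j,e_j)$. Consequently, if $\lambda_1\le\lambda_2\le\lambda_3$ denote the eigenvalues of $A_{g_u}$, then $g_u$ has negative sectional curvature everywhere if and only if the largest of the three pairwise sums is negative, i.e. $\lambda_2+\lambda_3<0$ on $\bar M$. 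Because $g_u=e^{2u}g$ rescales these eigenvalues by the positive factor $e^{-2u}$, this sign condition is unaffected if we instead compute the eigenvalues of $A_{g_u}$, viewed as a $(0,2)$-tensor, with respect to $g$.

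Next I would invoke the conformal transformation law $A_{g_u}=A_g-\Hess u+du\otimes du-\tfrac12|\nabla u|^2 g$, with all quantities taken with respect to $g$, so that the problem becomes: find $u\in C^\infty(\bar M)$ so that the symmetric tensor $B:=A_g-\Hess u+du\otimes du-\tfrac12|\nabla u|^2g$ has its two largest $g$-eigenvalues summing to a negative number at every point. The Morse-theoretic input is that a compact connected manifold with nonempty boundary admits a smooth $w$ with $\nabla w\neq0$ on all of $\bar M$, a critical-point-free function obtained by taking a Morse function and cancelling its critical points through the boundary; after adding a constant I may assume $w>0$. I would then set $u=e^{Nw}$ for a large parameter $N$ and compute $B$ in a $g$-orthonormal frame $e_1=\nabla w/|\nabla w|$, $e_2$, $e_3$. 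Using $\Hess u=Ne^{Nw}\Hess w+N^2e^{Nw}\,dw\otimes dw$, the terms of order $N^2e^{2Nw}$ produce eigenvalues $+\tfrac12N^2e^{2Nw}|\nabla w|^2$ along $e_1$ and $-\tfrac12N^2e^{2Nw}|\nabla w|^2$ along $e_2,e_3$, while all remaining terms are of lower order; perturbation theory then gives $\lambda_3=\tfrac12N^2e^{2Nw}|\nabla w|^2-N^2e^{Nw}|\nabla w|^2+O(Ne^{Nw})$ and $\lambda_2=-\tfrac12N^2e^{2Nw}|\nabla w|^2+O(Ne^{Nw})$, so that $\lambda_2+\lambda_3=-N^2e^{Nw}|\nabla w|^2+O(Ne^{Nw})$.

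The main obstacle, and the reason the exponential factor is essential, is the exact cancellation at top order: the first-order gradient part $du\otimes du-\tfrac12|\nabla u|^2g$ contributes nothing to $\lambda_2+\lambda_3$, so a linear choice such as $u=Nw$ would leave a sign-indefinite remainder of order $N$ coming from $\Hess w$. The decisive gain is the term $-N^2e^{Nw}\,dw\otimes dw$ inside $-\Hess u$, which lowers only the $e_1$-eigenvalue and supplies the strictly negative quantity $-N^2e^{Nw}|\nabla w|^2$ in $\lambda_2+\lambda_3$. Since $\bar M$ is compact and $\nabla w$ never vanishes, $|\nabla w|^2$ is bounded below by a positive constant, so this term dominates all lower-order contributions once $N$ is large; hence $\lambda_2+\lambda_3<0$ throughout $\bar M$, and $g_u=e^{2u}g$ has negative sectional curvature. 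The two points requiring genuine care are the clean bookkeeping of the orders of $N$ in the eigenvalue expansion, so that the surviving negative term is not swamped, and the existence, via Morse theory, of the critical-point-free function $w$ on $\bar M$.
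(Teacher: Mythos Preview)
Your argument is correct and matches the paper's approach: the same critical-point-free function $w$ (Lemma~\ref{lemma-diff-topologuy}), the same ansatz $u=e^{Nw}$, and the same decisive term $-N^{2}e^{Nw}\,dw\otimes dw$ inside $\Hess u$ that breaks the top-order cancellation $\lambda_{2}+\lambda_{3}=0$ coming from $du\otimes du-\tfrac12|\nabla u|^{2}g$. The only cosmetic difference is that the paper packages the computation as the algebraic splitting of Theorem~\ref{main-result1}, case~(ii) (writing $-A_{g_u}$ as a tensor with eigenvalues on $\partial\mathcal{P}_{2}$ plus the positive-definite piece $\tfrac12 N^{2}e^{Nw}|\nabla w|^{2}\,g$ that absorbs the $O(Ne^{Nw})$ remainder), whereas you reach the same conclusion via a direct eigenvalue/Weyl-type estimate in the frame adapted to $\nabla w$.
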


Note  in dimension three that the metric with negative sectional curvature must has  positive Einstein tensor  and vice versa.
This paper is devoted to exploring higher-dimensional analogues by generalizing Theorem \ref{thm0-main-dimension3} from the perspective of negative sectional curvature and positive Einstein tensor.

We first consider the case of positive Einstein tensor.
\begin{theorem}
	\label{thm2-Einstein1} 
	Each compact connected Riemannian manifold 
	with smooth boundary admits a smooth compact conformal metric 
	of positive Einstein tensor. 
\end{theorem}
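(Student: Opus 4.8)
The plan is to realize $g_u=e^{2u}g$ with $G_{g_u}>0$ by choosing $u$ built from a single auxiliary function, after first recording how the Einstein tensor transforms conformally. Starting from the standard transformation laws of $\Ric$ and $R$ under $g_u=e^{2u}g$ (or, equivalently, from $G_g=(n-2)(A_g-(\tr_g A_g)\,g)$ together with the conformal change of the Schouten tensor), a direct computation gives
\[
G_{g_u}=G_g+(n-2)\Big[(\Delta u)\,g-\Hess u+du\otimes du+\tfrac{n-3}{2}|\nabla u|^2\,g\Big],
\]
where $\Delta=\tr\Hess$. The whole difficulty is then to choose $u$ so that the bracketed tensor dominates the fixed tensor $-\tfrac{1}{n-2}G_g$ pointwise.

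When $n=3$ the last term drops out and the problem is exactly the one already solved: by Theorem \ref{thm0-main-dimension3} there is a conformal metric of negative sectional curvature, and, as noted above, in dimension three negative sectional curvature is equivalent to $G_{g_u}>0$. So I would dispose of $n=3$ by citing Theorem \ref{thm0-main-dimension3} and concentrate on $n\ge4$, where the extra term $\tfrac{n-3}{2}|\nabla u|^2\,g$ is strictly positive wherever $\nabla u\neq0$.

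For $n\ge4$ I would take $u=t\varphi$ with $t$ large and $\varphi$ a carefully chosen Morse function, so that the bracket becomes $t(\Delta\varphi\,g-\Hess\varphi)+t^2\big(d\varphi\otimes d\varphi+\tfrac{n-3}{2}|\nabla\varphi|^2\,g\big)$. Away from the critical points of $\varphi$ one has $|\nabla\varphi|\ge\delta>0$, so the $t^2$-term is bounded below by $\tfrac{n-3}{2}t^2\delta^2\,g>0$ and, for $t$ large, overwhelms both $G_g$ and the $O(t)$-term; near a critical point the $t^2$-term is only positive semidefinite and one must instead rely on $t(\Delta\varphi\,g-\Hess\varphi)$ being strictly positive there. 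At a critical point the eigenvalues of $\Delta\varphi\,g-\Hess\varphi$ are $\Delta\varphi-\lambda_j=\sum_{i\neq j}\lambda_i$, where $\lambda_1,\dots,\lambda_n$ are the Hessian eigenvalues, so positivity amounts to the trace exceeding the largest eigenvalue. This forces the critical point to have Morse index at most $n-2$ and, after rescaling the Hessian in Morse (normal) coordinates so that the positive eigenvalues dominate the negative ones, can then be arranged. I would thus reduce everything to the existence of a Morse function on $\bar M$ all of whose critical points are interior, of index $\le n-2$, with this eigenvalue normalization; a compactness argument covering $\bar M$ by small neighborhoods of the critical points (handled by the $O(t)$-term) and their complement (handled by the $O(t^2)$-term) then yields $G_{g_u}>0$ for $t$ large, and $g_u$ is smooth up to the boundary since $\bar M$ is compact.

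The main obstacle is precisely this Morse-theoretic input: producing a Morse function with no critical points of index $n-1$ or $n$. This is where the nonempty boundary is essential — one expects to push the top-index critical points out through $\partial M$ along their one- or zero-dimensional ascending manifolds, the absence of interior maxima already being forced by $\partial M\neq\emptyset$. Making this cancellation precise, and checking that the subsequent local rescaling of the Hessians neither destroys the Morse property nor creates new critical points, is the crux of the argument; everything else is the bounded-perturbation analysis sketched above.
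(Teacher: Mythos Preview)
Your conformal formula is correct and the strategy---make the gradient-squared term dominate---is the right one. The genuine gap is exactly where you locate it: the Morse-theoretic input. You need a Morse function on $\bar M$ with no interior critical points of index $n-1$ or $n$, and then, at each remaining critical point, the Hessian eigenvalues must satisfy $\sum_{i\neq j}\lambda_i>0$ for every $j$. Neither the handle-cancellation nor the subsequent local Hessian rescaling is actually carried out, and the latter is delicate: modifying $\varphi$ in Morse charts to boost the positive eigenvalues can create new critical points or spoil the inequality nearby. As written, the argument stops precisely at its crux.

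The paper sidesteps all of this with a stronger and much simpler Morse-theoretic fact (Lemma~\ref{lemma-diff-topologuy}): a compact connected manifold with nonempty smooth boundary carries a smooth function $v$ with \emph{no critical points whatsoever}. The proof is short---take a Morse function $w$ on the double $X$ of $M$, use the homogeneity lemma to produce a diffeomorphism of $X$ isotopic to the identity that moves every critical point of $w$ lying in $\bar M$ into $X\setminus\bar M$, and restrict. With such a $v$ in hand your own analysis for $n\ge4$ with $u=tv$ already works globally, since $|\nabla v|\ge\delta>0$ on all of $\bar M$ and the $t^2$-term is uniformly $\ge\tfrac{n-3}{2}\,t^2\delta^2 g$; there is nothing left to do at critical points because there are none. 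The paper in fact takes $u=e^{Nv}$ rather than $u=Nv$: the exponential ansatz makes $\Delta u\cdot g-\nabla^2 u$ contribute an additional nonnegative term $N^2e^{Nv}(|\nabla v|^2 g-dv\otimes dv)$, which allows the construction to go through uniformly for all $n\ge3$ (via Theorem~\ref{main-result2}) without treating $n=3$ separately.
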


Next, we turn to the case of negative sectional curvature.
From the 
curvature tensor  decomposition \eqref{decomposion1-0}, the
  locally conformally flat metric is of special interest, in which case the
Weyl tensor   vanishes,  and then 
	\begin{equation}
	\label{decomposion1}
	\begin{aligned}
		\mathrm{Riem}_g=A_g \odot g.
	\end{aligned}
\end{equation}
 Consequently, on a locally conformally flat manifold, 
one can potentially gather information about the Riemannian curvature tensor effectively through the Schouten tensor, which is a symmetric (0,2)-tensor and is much simpler to handle.
  
 Motivated by the decomposition \eqref{decomposion1}, 
 in this paper we prove that every locally conformally flat manifold with boundary is pointwisely conformal to a negatively curved manifold.
 
 \begin{theorem}
 	\label{thm0-main-LCF}
 	Let  $(\bar M^n,g)$ be  a compact connected  locally conformally flat Riemannian manifold  with smooth boundary. There exists a smooth compact  conformal 
 	metric $g_u=e^{2u}g$ with negative sectional curvature.
 \end{theorem}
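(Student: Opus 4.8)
The plan is to translate the curvature condition into an algebraic condition on the Schouten tensor, and then to produce the conformal factor by an explicit ansatz built from a Morse function rather than by solving a PDE.

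First I would record the algebraic reduction. Since local conformal flatness is preserved under conformal change, $g_u=e^{2u}g$ still satisfies $\mathrm{Riem}_{g_u}=A_{g_u}\odot g_u$ as in \eqref{decomposion1}. Evaluating the Kulkarni--Nomizu product on an orthonormal pair, the sectional curvature of $g_u$ in the plane spanned by two eigendirections of $A_{g_u}$ equals $\lambda_i+\lambda_j$, where $\lambda_1,\dots,\lambda_n$ are the eigenvalues of $A_{g_u}$ with respect to $g_u$ (this is consistent with the dimension-three identification of negative sectional curvature and positive Einstein tensor noted above, since $G_g=(n-2)(A_g-(\tr A_g)g)$). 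Hence $g_u$ has negative sectional curvature if and only if $\lambda(A_{g_u})\in\Gamma$, where
\[
\Gamma=\{\lambda\in\mathbb{R}^n:\lambda_i+\lambda_j<0 \text{ for all } i\neq j\},
\]
an open, symmetric, convex cone (the sum of the two largest eigenvalues is a convex function of $\lambda$). Because $\Gamma$ is a cone and the $g_u$-eigenvalues are $e^{-2u}$ times the $g$-eigenvalues, it is equivalent to ask that the conformally transformed Schouten tensor
\[
W[u]:=A_g-\nabla^2 u+du\otimes du-\tfrac12|\nabla u|^2 g
\]
have its $g$-eigenvalues in $\Gamma$ at every point, all derivatives being taken with respect to $g$.

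Next I would make the ansatz $u=f(\varphi)$ for a function $\varphi$ on $\bar M$ and a convex increasing $f$. A direct computation gives
\[
W[f(\varphi)]=A_g-f'\nabla^2\varphi+(f'^2-f'')\,d\varphi\otimes d\varphi-\tfrac12 f'^2|\nabla\varphi|^2 g.
\]
Working at a point where $\nabla\varphi\neq0$ in an orthonormal frame whose first vector is $\nabla\varphi/|\nabla\varphi|$, one eigenvalue arises from the gradient direction and the remaining $n-1$ from the orthogonal block, and in the gradient--orthogonal cross term the summand $\tfrac12 f'^2|\nabla\varphi|^2$ cancels. Bounding $A_g$ and $\nabla^2\varphi$ by a constant $C$ and setting $c_0=\min_{\bar M}|\nabla\varphi|$, the sum of two orthogonal eigenvalues is at most $C(1+f')-c_0^2 f'^2$, while a gradient--orthogonal sum is at most $C(1+f')-c_0^2 f''$. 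Choosing $\varphi\ge 1$ and $f=e^{M\varphi}$, so that $f''=Mf'$ and both $f'$ and $f''/f'$ are large, both upper bounds become negative once $M$ is large enough, uniformly on $\bar M$ --- provided $c_0>0$. Thus $\lambda(W[u])\in\Gamma$ everywhere.

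The crux is therefore to secure $c_0>0$, i.e. to find a smooth function $\varphi$ on $\bar M$ with no critical points; this is the main obstacle and is exactly where the hypothesis of nonempty boundary is essential. On a closed manifold every $\varphi$ attains a maximum, near which $-f'\nabla^2\varphi$ produces two positive eigenvalues and the construction breaks down; more generally an interior critical point of index $\ge 1$ is fatal to the $f(\varphi)$ ansatz. On a compact connected manifold with nonempty boundary, however, a function without critical points exists, which is the content of the Morse-theoretic input of \cite{yuan-PUE1}. Granting such a $\varphi$, the ansatz above yields $W[u]$ with eigenvalues in $\Gamma$ at every point, so $g_u=e^{2u}g$ has negative sectional curvature, proving Theorem~\ref{thm0-main-LCF}. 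The only remaining task is the routine bookkeeping of the frame-dependent estimates and the extraction of uniform constants from compactness of $\bar M$.
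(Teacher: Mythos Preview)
Your proof is correct and follows essentially the same route as the paper: both reduce via \eqref{decomposion1} to the condition $\lambda(-g^{-1}A_{g_u})\in\mathcal{P}_2$, invoke Lemma~\ref{lemma-diff-topologuy} to obtain a critical-point-free $\varphi$ with $\varphi\ge 1$, take the ansatz $u=e^{N\varphi}$, and exploit the same cancellation---the $\tfrac12 f'^2|\nabla\varphi|^2$ contributions in the gradient--orthogonal pairwise sum cancel, leaving the $-f''|\nabla\varphi|^2$ term to push the eigenvalues strictly into the cone. The paper packages this computation as Case~$\mathrm{(ii)}$ of Theorem~\ref{main-result1} (with $\alpha=\tfrac12$, $\beta=1$, $\Gamma=\mathcal{P}_2$, $\varrho_\Gamma=2$), whereas you carry out the pairwise-eigenvalue estimate by hand; the ``routine bookkeeping'' you defer is exactly the Weyl/Ky~Fan perturbation bound absorbing the non-block-diagonal error $A_g-f'\nabla^2\varphi$.
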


\begin{remark}	
	 Note that the decomposition \eqref{decomposion1} holds on every  3-manifold. 
	 In dimension three,    Theorems \ref{thm2-Einstein1} and \ref{thm0-main-LCF} reduce to Theorem \ref{thm0-main-dimension3}, which is itself a special case of  \cite[Theorem 1.2]{yuan-PUE3-construction}.
\end{remark}

  The proof is based on the 	 Morse theory argument that was proposed  by \cite{yuan-PUE1} and subsequently developed in \cite{yuan-PUE2-note,yuan-PUE3-construction}. 
In our proof of Theorems \ref{thm2-Einstein1} and \ref{thm0-main-LCF}, we follow the treatment in \cite[Theorem 1.2]{yuan-PUE3-construction} closely.

 \vspace{1mm}
The article is organized as follows.  
In Section \ref{sec2-cons} we construct admissible functions. 
As applications, we give the proof of  in Section \ref{sec-constructionofmetric}.
 
 \vspace{1mm}
 
  The author  would like to thank  Professor Yi Liu for generously 
 answering questions related to the proof of Lemma \ref{lemma-diff-topologuy}.
  The author also    wishes to   thank Professor Jiaping Wang for drawing his attention to 
  \cite{Hass1994}.
 The author is  supported by    Guangdong Basic and Applied Basic Research Foundation (Grant No. 2023A1515012121), and  
 Guangzhou Science and Technology Program  (Grant No. 202201010451).



\section{Construction of admissible functions}
\label{sec2-cons}

 \subsection{Some result on Morse function}

The following lemma asserts that every compact connected manifold with boundary carries a Morse function without any critical point.    

\begin{lemma}
	\label{lemma-diff-topologuy}
	Let $\bar M$
	be a compact connected  
	manifold of dimension $n\geq 2$ with smooth boundary. Then there is a smooth function $v$ without any critical points. 
\end{lemma}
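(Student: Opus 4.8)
The plan is to begin with an arbitrary Morse function on $\bar M$ and then delete its critical points one at a time, by \emph{pushing each of them out through the boundary}; this is possible precisely because $\partial M\neq\emptyset$ and $\bar M$ is connected. First I would invoke the standard existence of Morse functions: after a Whitney embedding $\bar M\hookrightarrow\mathbb{R}^N$, a generic linear height function restricts to a smooth $f\colon\bar M\to\mathbb{R}$ all of whose critical points $p_1,\dots,p_k$ are nondegenerate, lie in the interior $M$, have distinct critical values, and satisfy $df\neq0$ at every point of $\partial M$. (The last two properties are generic: the set of unit directions normal to $\bar M$ along $\partial M$ has dimension $(n-1)+(N-n-1)=N-2$, hence its image in $S^{N-1}$ has measure zero, so a generic direction produces no boundary critical point.) It then suffices to remove the $p_i$ one by one without introducing new critical points and without destroying the condition $df\neq0$ along $\partial M$.

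The removal is inductive. Given an interior critical point $p$ with $df\neq0$ on $\partial M$, connectedness lets me join $p$ to a boundary point $q$ by a path; since the critical set is finite (codimension $n\geq2$), a general-position perturbation turns this into an embedded arc $\gamma$ from $p$ to $q$ meeting $\partial M$ transversally only at $q$ and avoiding all other critical points. I then choose a tubular neighborhood $N\cong B^n$ of $\gamma$ whose only critical point is $p$ and which meets $\partial M$ in a disk $C\subset\partial M$ around $q$, and I write $\partial N=C\cup S$ with $S=\partial N\setminus\partial M$ the part interior to $\bar M$. The plan is to modify $f$ only inside $N$, keeping it equal to $f$ near $S$ so that it glues with the unchanged $f$ on $\bar M\setminus N$, while allowing it to change on the face $C\subset\partial M$.

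The heart of the argument is this local modification, and it is here that $\partial M\neq\emptyset$ is used essentially. The guiding model is the ball: on $B^n$ the function $|x|^2$ has a single interior critical point and a regular boundary sphere, yet it may be replaced by the linear function $x_1$, which has no critical point at all; the former minimum has simply slid out to the boundary point $(-1,0,\dots,0)$, where the new function has nonzero outward differential. In general one slides the critical point of arbitrary Morse index $\lambda$ along $\gamma$ and lets it escape through $C$. The obstruction to killing the zero of $\nabla f$ inside $N$ while fixing $\nabla f$ on \emph{all} of $\partial N$ is its Poincar\'e--Hopf index $(-1)^\lambda\neq0$; but since we are free to alter the function on the face $C$, this index is permitted to leak out through $\partial M$ and no obstruction remains. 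Concretely, the elimination can be realized by attaching an external collar $\partial M\times[0,1]$ (which does not change $\bar M$ up to diffeomorphism), introducing there an auxiliary critical point of adjacent index together with a single gradient trajectory joining it to $p$ along $\gamma$, and then applying Milnor's first cancellation theorem to annihilate the pair. The result is a smooth function agreeing with $f$ off $N$, with one fewer critical point and with nonvanishing differential on $\partial M$.

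Finally, removing a regular neighborhood of an arc that runs from the interior to the boundary leaves $\bar M$ connected and with nonempty boundary, so the inductive hypothesis persists and the step applies to $p_2,\dots,p_k$ in turn. After $k$ applications I obtain a smooth $v$ with $dv\neq0$ at every point of $\bar M$, which is the desired function. I expect the genuinely delicate point to be the local elimination of the third paragraph---making the ``leak the index out through $\partial M$'' mechanism precise and uniform over all indices $\lambda$, and verifying that the resulting differential stays nonzero on $\partial M$---which is the topological input underlying the lemma.
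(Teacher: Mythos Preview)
Your strategy is sound and can be made rigorous, but the paper reaches the conclusion by a much shorter route. Instead of working inductively inside $\bar M$, the paper passes to the \emph{double} $X$ of $\bar M$, takes any Morse function $w$ on the closed manifold $X$, and then invokes the homogeneity lemma (Milnor, \emph{Topology from the Differentiable Viewpoint}): since $X$ is connected of dimension $\geq 2$, there is a diffeomorphism $h\colon X\to X$, smoothly isotopic to the identity, that sends the finitely many critical points of $w$ lying in $\bar M$ to chosen regular points in the other half $X\setminus\bar M$ while fixing the remaining critical points. Then $v=(w\circ h^{-1})|_{\bar M}$ has no critical points at all. No induction, no collar, no cancellation.

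Your argument amounts to a local, one-point-at-a-time version of the same mechanism, and your intuition that the obstruction ``leaks out through $\partial M$'' is exactly right. But the concrete realization you propose---manufacturing in the attached collar an auxiliary critical point of adjacent index with a unique connecting trajectory and then invoking Milnor's first cancellation theorem---is heavier than the problem requires, and carries its own bookkeeping (arranging the single trajectory, ensuring the cancellation stays away from the remaining boundary, keeping $df\neq 0$ on $\partial M$). Once you have attached the collar and extended $f$ into it without new critical points (possible since $df\neq 0$ on $\partial M$), you may simply take a compactly supported isotopy of the enlarged manifold that slides $p$ along $\gamma$ into the collar, and precompose $f$ with its inverse; that is precisely the homogeneity lemma applied locally, and it already removes $p$. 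So your delicate third paragraph can be replaced by a one-line isotopy argument, after which iterating your induction recovers the paper's proof step by step. The paper's advantage is brevity and the avoidance of any Morse-theoretic cancellation; yours makes more visible why $\partial M\neq\emptyset$ is the essential hypothesis.
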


\begin{proof} 
	Let $X$ be the double of $M$. Let $w$ be a smooth Morse function on $X$ with the critical set $\{p_i\}_{i=1}^{m+k}$, among which $p_1,\cdots, p_m$ are all the critical points  being in $\bar M$. 
	Pick $q_1, \cdots, q_m\in X\setminus \bar M$ but not the critical point of $w$. By homogeneity lemma 
	(see \cite{Milnor-1997}), 
	one can find a diffeomorphism
	$h: X\to X$, which is smoothly isotopic to the identity, such that 
	\begin{itemize}
		\item $h(p_i)=q_i$, $1\leq  i\leq  m$.
		\item $h(p_i)=p_i$, $m+1\leq  i\leq  m+k$.
	\end{itemize}
	Then $v=w\circ h^{-1}\big|_{\bar M}$ is the desired 
	function. That is, $d v\neq 0$.
	
\end{proof}


 \subsection{Construction of admissible functions} 
We  improve some of results in  \cite{yuan-PUE1,yuan-PUE2-note,yuan-PUE3-construction}.
Let  
$\Gamma\subsetneq \mathbb{R}^n$ be an \textit{open},  \textit{symmetric}, \textit{convex} cone in $\mathbb{R}^n$ with  vertex at the origin, 
non-empty boundary $\partial \Gamma$, 
and
$$\Gamma_n:=\left\{\lambda=(\lambda_1,\cdots,\lambda_n)\in \mathbb{R}^n: \mbox{ each } \lambda_i>0\right\}\subseteq\Gamma.$$
Denote $\vec{\bf1}=(1,\cdots,1)\in\mathbb{R}^n$, $(1,\cdots,1, 1-\varrho_\Gamma)\in\partial\Gamma$ and $\bar \Gamma=\Gamma\cup\partial\Gamma $.

Let $(\bar M^n,g)$ be a connected compact  Riemannian manifold of dimension $n\geq 3$  with smooth boundary. For a symmetric $(0,2)$-tensor $A$, let $\lambda(g^{-1}A)$ denote an $n$-tuple of of the eigenvalues of $A$ with respect to $g$.
 Let $\mathrm{U}(x)$ be a smooth symmetric $(0,2)$ tensor on $\bar M$.
Let    
$\alpha $, $\beta $ and 
$\varrho $ be given constants. 
Consider 
\begin{equation}
	\label{def1-V}
	\begin{aligned}
		V[u]=\nabla^2u+\alpha |\nabla u|^2 g-\beta  du\otimes du+ R(x,\nabla u)+\mathrm{U}(x)
	\end{aligned}
\end{equation}	
and 
\begin{equation}
	\label{def2-W}
	\begin{aligned}
		W[u]=\Delta u\cdot  g-\varrho \nabla^2u+\alpha |\nabla u|^2 g-\beta  du\otimes du+ R(x,\nabla u)+\mathrm{U}(x)
	\end{aligned}
\end{equation}	
where   
$R(x,p)$ is symmetric $(0,2)$ tensor smoothly depends on $T\bar M$.
We are interesting in the following two cases:
\begin{enumerate}

	\item 	There exists a positive continues function $\gamma(x,p)$ 
	with $\lim_{|p|\to+\infty}\gamma(x,p)=0$ uniformly, such that 
	\begin{equation}
		\label{assump2-R}
		\begin{aligned}
			|R(x,p)|\leq \gamma(p)(1+|p|^2), \,\, \forall (x,p)\in T\bar M. 
		\end{aligned}
	\end{equation}
	
		\item There is a uniform constant $C$ such that 
	\begin{equation}
		\label{assump1-R}
		\begin{aligned}
			|R(x,p)|\leq C(1+|p|), \,\, \forall (x,p)\in T\bar M. 
		\end{aligned}
	\end{equation}

\end{enumerate}
\begin{remark}

The considered cases include the Schouten tensor, and modified Schouten tensor 
	\begin{equation}
		\begin{aligned}
			A_{{g}}^{\tau,\zeta}=\frac{\zeta}{n-2} \left({Ric}_{g}-\frac{\tau}{2(n-1)}   {R}_{g}\cdot {g}\right), \,\, \alpha=\pm1,  \,\, \tau \in \mathbb{R}, \nonumber
		\end{aligned}
	\end{equation}
	as well as the Bakry-Emery  curvature tensor and more general $N$-Ricci curvature $$	\Ric_{N,\mu}=\Ric + \nabla^2 V-\frac{\mathrm{d} V\otimes \mathrm{d} V}{N-n}$$ on a metric measure space $(M,\mathrm{d},\mu)$,  
	where $\mathrm{d} \mu=e^{-V}\mathrm{d} \mathrm{vol}$.  
	
	Under the conformal deformation $g_u=e^{2u}g$, 
	\begin{equation}
		\label{conformal-Schouten1}
		\begin{aligned}
			-A_{g_u}
			= \,& 
			- A_{g} +\nabla^2 u	+\frac{1}{2}|\nabla u|^2  \cdot g- du\otimes du,  
		\end{aligned}
	\end{equation}
	\begin{equation}
	 \label{conformal-formula1}
		\begin{aligned}
			A_{{g}_u}^{\tau,\zeta}
			= A_{g}^{\tau,\zeta}
			+\frac{\zeta(\tau-1)}{n-2}\Delta u \cdot g-\zeta  \nabla^2 u
			+\frac{\zeta(\tau-2)}{2}|\nabla u|^2\cdot g
			+\zeta  \mathrm{d}{u}\otimes\mathrm{d}{u}, 
		\end{aligned}
	\end{equation}
	and 
	\begin{equation}
		\label{formula1-generalized-Ricc}
		\begin{aligned}  
			-\Ric_{N,\mu}(g_u)= \,&
			\Delta u\cdot g 	+(n-2)\nabla^2 u	
			+(n-2)(|\nabla u|^2 \cdot g-\mathrm{d} u\otimes \mathrm{d} u)
			\\\,&
			+\mathrm{d} u\otimes \mathrm{d} V+\mathrm{d} V\otimes \mathrm{d} u 
			-\langle\nabla u,\nabla V \rangle_g \cdot g 
			-\Ric_{N,\mu}(g).  
		\end{aligned}
	\end{equation}

\end{remark}

Using the Morse theory technique, we prove 
\begin{theorem}
	\label{main-result1}
	Suppose   one of the following holds
	\begin{itemize}	
		\item[$\mathrm{(i)}$]   
		$		(\alpha,\cdots,\alpha,\alpha-\beta) \in  \Gamma$, and  	$R(x,p)$ satisfies \eqref{assump2-R}. 
		
		\item[$\mathrm{(ii)}$]  Suppose $\alpha>0$,	 $(\alpha,\cdots,\alpha,\alpha-\beta) \in\partial \Gamma$, 
		and  	$R(x,p)$ satisfies \eqref{assump1-R}. 
		
	\end{itemize} 
	Then there is a   function $u\in C^\infty(\bar M)$ such that 
	\begin{equation}
		\label{admissible-2}
		\begin{aligned} 
			\lambda(g^{-1}V[u])\in\Gamma \textrm{ in } \bar M.  \nonumber 
		\end{aligned}
	\end{equation}
	
\end{theorem}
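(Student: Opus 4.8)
The plan is to reduce the construction to a one-variable problem by composing a fixed critical-point-free function with a scalar profile $f$, handling the interior case $\mathrm{(i)}$ by a homothety and the boundary case $\mathrm{(ii)}$ by a convex profile whose second derivative produces the margin needed to enter $\Gamma$. First I would invoke Lemma \ref{lemma-diff-topologuy} to fix $v\in C^\infty(\bar M)$ with $\nabla v\neq 0$ everywhere; compactness of $\bar M$ then gives $|\nabla v|\geq\delta>0$, and after adding a constant I may assume $v\geq 1$. Seeking $u=f(v)$, a direct computation from \eqref{def1-V} yields
\[
V[f(v)]=\alpha f'(v)^2|\nabla v|^2 g+\big(f''(v)-\beta f'(v)^2\big)\,dv\otimes dv+E,\qquad E:=f'(v)\nabla^2 v+R(x,f'(v)\nabla v)+\mathrm{U}(x).
\]
In an orthonormal frame whose last vector is $\nabla v/|\nabla v|$ the first two terms are simultaneously diagonal, so the eigenvalue $n$-tuple of the principal part with respect to $g$ is
\[
\mu=f'^2|\nabla v|^2(\alpha,\dots,\alpha,\alpha-\beta)+f''|\nabla v|^2(0,\dots,0,1).
\]
Everything then reduces to choosing $f$ so that $\mu$, perturbed by $E$, lands in $\Gamma$; here I would use Weyl's inequality (for the $g$-symmetric operators), which moves eigenvalues by at most a constant times $\|E\|$, together with the scaling invariance of the cone $\Gamma$.

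In case $\mathrm{(i)}$ I take the linear profile $u=tv$, so $f''=0$ and $\mu=t^2|\nabla v|^2(\alpha,\dots,\alpha,\alpha-\beta)$ is a positive multiple of the vector $(\alpha,\dots,\alpha,\alpha-\beta)\in\Gamma$, hence in the open cone and, by compactness, at uniform distance from $\partial\Gamma$ after rescaling by $t^{-2}$. Assumption \eqref{assump2-R} together with $|\nabla v|\geq\delta>0$ forces $|f'\nabla v|\to\infty$ and hence $t^{-2}E\to 0$ uniformly on $\bar M$, so for $t$ large $\lambda(g^{-1}V[tv])\in\Gamma$ everywhere.

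Case $\mathrm{(ii)}$ is the crux. Since $\alpha>0$ and $(\alpha,\dots,\alpha,\alpha-\beta)\in\partial\Gamma$, this vector equals $\alpha(1,\dots,1,1-\beta/\alpha)$, which forces $\beta/\alpha=\varrho_\Gamma$, so the first summand of $\mu$ lies exactly on $\partial\Gamma$. The key geometric observation is that the extra term $f''|\nabla v|^2(0,\dots,0,1)$, with $f''>0$, pushes $\mu$ into the interior: for every $s>0$ one has $(1,\dots,1,s)=s\,(1/s,\dots,1/s,1)\in\Gamma_n\subseteq\Gamma$, so the ray $s\mapsto(1,\dots,1,s)$ lies in $\Gamma$ precisely for $s>1-\varrho_\Gamma$, and the distance of $(1,\dots,1,1-\varrho_\Gamma+\epsilon)$ to $\partial\Gamma$ grows at least linearly, say $\geq\kappa\epsilon$, for small $\epsilon>0$, by concavity of the distance-to-boundary function of the convex set $\Gamma$. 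Undoing the rescaling by cone homogeneity gives the quantitative margin $\mathrm{dist}(\mu,\partial\Gamma)\geq\kappa\,|\nabla v|^2 f''$ whenever the displacement $f''/(\alpha f'^2)$ is small. Meanwhile \eqref{assump1-R} and compactness bound the perturbation by $\|E\|\leq C(1+f')$. It therefore suffices to make $f''$ dominate $f'$ while keeping $f'$ large: the exponential profile $u=\tfrac1K e^{Kv}$, for which $f'=e^{Kv}\geq e^{K}$ and $f''=Kf'$, has displacement $f''/(\alpha f'^2)=K/(\alpha f')\to 0$ and margin $\geq\kappa K\delta^2 f'$, which beats $C(1+f')\leq 2Cf'$ uniformly on $\bar M$ once $K$ is large. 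Weyl's inequality then places $\lambda(g^{-1}V[u])$ in $\Gamma$.

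I expect the principal difficulty to be precisely case $\mathrm{(ii)}$: converting the qualitative statement that ``moving in the $(0,\dots,0,1)$ direction enters $\Gamma$'' into a quantitative lower bound for the distance to $\partial\Gamma$, and then balancing the growth of the profile so that this margin (of size $\sim f''$) dominates the lower-order error (of size $\sim f'$). This balance is exactly why the linear bound \eqref{assump1-R} is imposed in $\mathrm{(ii)}$ rather than the weaker, near-quadratic bound \eqref{assump2-R}.
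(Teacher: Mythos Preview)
Your argument is correct and rests on the same core idea as the paper: take $u=f(v)$ for a critical-point-free $v$ from Lemma~\ref{lemma-diff-topologuy} and choose the profile so that the quadratic-in-$f'$ term dominates, with the convexity $f''>0$ supplying the push off $\partial\Gamma$ in the boundary case. The differences are minor but worth noting. In case~$\mathrm{(i)}$ you use the linear profile $u=tv$, which is slightly simpler than the paper's uniform ansatz $u=e^{Nv}$ since you do not need the $f''$ term there. In case~$\mathrm{(ii)}$ you argue geometrically: concavity of the distance-to-$\partial\Gamma$ function on the convex cone gives the linear margin $\mathrm{dist}(\mu,\partial\Gamma)\gtrsim f''|\nabla v|^2$, after which Weyl's inequality absorbs the error $E$. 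The paper instead performs an algebraic rewriting,
\[
V[e^{Nv}]=\tfrac{1}{\beta}N^2e^{Nv}(\beta e^{Nv}-1)\big(\alpha|\nabla v|^2 g-\beta\,dv\otimes dv\big)+\tfrac{\alpha}{\beta}N^2e^{Nv}|\nabla v|^2\,g+(\text{lower order}),
\]
which isolates a nonnegative multiple of a tensor with eigenvalues on $\partial\Gamma$ plus an explicit positive-definite piece $\tfrac{\alpha}{\beta}N^2e^{Nv}|\nabla v|^2\,g$; the inclusion $\bar\Gamma+\Gamma_n\subseteq\Gamma$ then finishes the argument without any distance estimate or Weyl inequality. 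Both routes capture the same mechanism; the paper's is a touch more self-contained, while yours makes the geometric reason the boundary case succeeds more transparent.
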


\begin{remark}
	 	The positivity of $\alpha$ in \eqref{def1-V} plays crucial roles in the case $\mathrm{(ii)}$.  
	 In general one could not expect a similar construction for $\alpha\leq0$.
	 A specific example is the obstruction to the existence of metric of positive Schouten tensor   in each conformal class.
	 Note  that the  locally conformally flat metric of positive Schouten tensor has positive sectional curvature.
\end{remark}

\begin{proof}
By Lemma \ref{lemma-diff-topologuy}, we can pick a    smooth function  $v$ with $\mathrm{d}v\neq 0$ and $v\geq 1$ in $\bar M$.
As in 
\cite{yuan-PUE1}, also in \cite{yuan-PUE2-note,yuan-PUE3-construction}, take
\begin{equation}
	\label{admissible-metric2}
	\begin{aligned}
		{u}=e^{Nv}.    
	\end{aligned}
\end{equation}
	By straightforward computation
	\begin{equation}
		\begin{aligned}
			V[u]
			= \,&
			N^2e^{2Nv} (\alpha |\nabla v|^2 \cdot g-\beta \mathrm{d}v\otimes \mathrm{d}v)  +N^2e^{Nv}\mathrm{d}v\otimes \mathrm{d}v
			\\ \,&
			+R(x,Ne^{Nv}\nabla v)
			+Ne^{Nv}\nabla^2 v	+U.  \nonumber
		\end{aligned}
	\end{equation}

Notice that $\mathrm{d}v\neq 0$ in $\bar M$.
Under the assumption \eqref{assump2-R}, for any $\epsilon>0$ 
one can find $N_\epsilon>0$ (depending on $\epsilon^{-1}$) such that for any $N\geq N_\epsilon$ 
\begin{equation}
	\label{assump2-R-2}
	\begin{aligned}
		|R(x,Ne^{Nv}\nabla v)|\leq \epsilon  N^2 e^{2Nv}|\nabla v|^2.  
	\end{aligned}
\end{equation} 
Similarly, if $R(x,p)$ satisfies \eqref{assump1-R} then one can find a uniform constant $C$ such that 
\begin{equation}
	\label{assump1-R-2}
	\begin{aligned}
		|R(x,Ne^{Nv}\nabla v)|\leq C  N  e^{Nv}|\nabla v|, \textrm{ for } N\geq 1.  
	\end{aligned}
\end{equation}  

	{\bf Case $\mathrm{(i)}$}: Fix a $\epsilon_0>0$ so that $(\alpha,\cdots,\alpha,\alpha-\beta)\in 2\epsilon_0\vec{\bf1}+\Gamma$.
	Take $\epsilon=\epsilon_0$ in \eqref{assump2-R-2}, then
	\begin{equation}
		\begin{aligned}
			\lambda\big(g^{-1}[N^2e^{2Nv} (\alpha |\nabla v|^2 \cdot g-\beta \mathrm{d}v\otimes \mathrm{d}v)
			+R(x,Ne^{Nv}\nabla v) ] \big) \in \epsilon_0 N^2e^{2Nv}|\nabla v|^2 \vec{\bf1}+\Gamma.  \nonumber
		\end{aligned}
	\end{equation}
Obviously 
$|Ne^{Nv}\nabla^2 v+U|\leq C(1+Ne^{Nv}).$
So  
	$\lambda(g^{-1}V[u])\in\Gamma$ if $N\gg1$.
	
	{\bf Case $\mathrm{(ii)}$}: 
	We follow closely the treatment in the proof of \cite[Theorem 1.2]{yuan-PUE3-construction}  to produce $ \frac{\alpha }{\beta} N^2 e^{Nv}|\nabla v|^2 \cdot g$.
	More precisely, 
	we  rewrite $V[u]$ as follows
	\begin{equation}
		\begin{aligned}
			V[u]
			= \,&
			\frac{1}{\beta} N^2e^{Nv}(\beta e^{Nv}-1) (\alpha |\nabla v|^2 \cdot g-\beta \mathrm{d}v\otimes \mathrm{d}v)  
			\\\,&
			+ \frac{\alpha }{\beta} N^2 e^{Nv}|\nabla v|^2 \cdot g 
			+R(x,Ne^{Nv}\nabla v)
			+Ne^{Nv}\nabla^2 v	+U.  \nonumber
		\end{aligned}
	\end{equation}
Note that   $ \beta=  \varrho_\Gamma \alpha$ in this case. We have $N_1$ such that for any $N\geq N_1$ 
	\begin{equation}
		\begin{aligned}
			\beta e^{Nv}-1 =\varrho_\Gamma \alpha e^{Nv}-1\geq 0,   \nonumber
		\end{aligned}
	\end{equation}   
	\begin{equation}
		\begin{aligned}
			\frac{\alpha }{\beta} N^2 e^{Nv}|\nabla v|^2 \cdot g 
			+R(x,Ne^{Nv}\nabla v)
			+Ne^{Nv}\nabla^2 v	+U \geq 	\frac{1}{2\varrho_\Gamma}  N^2 e^{Nv}|\nabla v|^2 \cdot g.  \nonumber
		\end{aligned}
	\end{equation}
In the last inequality, we also used \eqref{assump1-R-2}.
	From this,   $\lambda(g^{-1}V[u])\in\Gamma$ if $N\gg1$.
	
\end{proof}


\begin{theorem}
	\label{main-result2} 
	Suppose   one of the following holds
	\begin{itemize}	
		\item[$\mathrm{(i)'}$]   
		$(\alpha,\cdots,\alpha,\alpha-\beta) \in  \Gamma$, and  	$R(x,p)$ satisfies \eqref{assump2-R}. 
		
		\item[$\mathrm{(ii)'}$]  $\varrho<\varrho_\Gamma$,  $(\alpha ,\cdots,\alpha,\alpha -\beta ) \in\partial\Gamma$, and  	$R(x,p)$ satisfies \eqref{assump1-R}. 
		
		\item[$\mathrm{(iii)'}$]    $\varrho=\varrho_\Gamma$, $\alpha \varrho_\Gamma-\beta>0$,	$(\alpha ,\cdots,\alpha,\alpha -\beta ) \in\partial\Gamma$, and  	$R(x,p)$ satisfies \eqref{assump1-R}.

	\end{itemize} 
	Then there is a   function $u\in C^\infty(\bar M)$ such that 
	\begin{equation}
		\label{admissible-3}
		\begin{aligned} 
			\lambda(g^{-1}W[u])\in\Gamma \textrm{ in } \bar M.  \nonumber
		\end{aligned}
	\end{equation}
	
\end{theorem}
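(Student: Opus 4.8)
The plan is to mirror the proof of Theorem \ref{main-result1}: feed in the function $u=e^{Nv}$ furnished by Lemma \ref{lemma-diff-topologuy} (so $\mathrm{d}v\neq 0$ and $v\geq 1$ on $\bar M$) and let $N\to\infty$. A direct computation, using $\nabla^2 u=Ne^{Nv}\nabla^2 v+N^2e^{Nv}\mathrm{d}v\otimes\mathrm{d}v$ and $\Delta u=Ne^{Nv}\Delta v+N^2e^{Nv}|\nabla v|^2$, gives
\[
W[u]=N^2e^{2Nv}T_1+N^2e^{Nv}T_2+Ne^{Nv}T_3+R(x,Ne^{Nv}\nabla v)+U,
\]
where $T_1=\alpha|\nabla v|^2 g-\beta\,\mathrm{d}v\otimes\mathrm{d}v$, $T_2=|\nabla v|^2 g-\varrho\,\mathrm{d}v\otimes\mathrm{d}v$, and $T_3=\Delta v\cdot g-\varrho\nabla^2 v$. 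In the $g$-orthonormal frame whose last vector is parallel to $\nabla v$, both $T_1,T_2$ are diagonal, with $\lambda(g^{-1}T_1)=|\nabla v|^2(\alpha,\dots,\alpha,\alpha-\beta)$ and $\lambda(g^{-1}T_2)=|\nabla v|^2(1,\dots,1,1-\varrho)$; the remaining terms are governed by \eqref{assump2-R-2} or \eqref{assump1-R-2} together with $|T_3|,|U|\leq C$, and $|\nabla v|$ is bounded below by compactness of $\bar M$. Everything then reduces to eigenvalue bookkeeping across the three scales $N^2e^{2Nv}\gg N^2e^{Nv}\gg Ne^{Nv}$.

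In Case $\mathrm{(i)'}$ the leading eigenvalue vector $N^2e^{2Nv}|\nabla v|^2(\alpha,\dots,\alpha,\alpha-\beta)$ lies in the open cone $\Gamma$, uniformly bounded away from $\partial\Gamma$ on $\bar M$; under \eqref{assump2-R} all other contributions are $o(N^2e^{2Nv})$, so openness of $\Gamma$ yields $\lambda(g^{-1}W[u])\in\Gamma$ for $N\gg 1$, exactly as in Case $\mathrm{(i)}$ of Theorem \ref{main-result1}.

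For Case $\mathrm{(ii)'}$ the leading vector $(\alpha,\dots,\alpha,\alpha-\beta)$ lies on $\partial\Gamma$, so the room must come from the subleading term. Since $\varrho<\varrho_\Gamma$, the vector $(1,\dots,1,1-\varrho)$ lies in the open cone $\Gamma$. As $g^{-1}T_1$ and $g^{-1}T_2$ commute, the combined leading-plus-subleading eigenvalue vector is
\[
N^2e^{2Nv}|\nabla v|^2\big[(\alpha,\dots,\alpha,\alpha-\beta)+e^{-Nv}(1,\dots,1,1-\varrho)\big],
\]
an element of $\partial\Gamma+\Gamma\subseteq\Gamma$; since adding a vector of $\bar\Gamma$ does not decrease the depth supplied by the interior summand, its distance to $\partial\Gamma$ is of order $N^2e^{Nv}$. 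The perturbation $Ne^{Nv}T_3+R+U$ has norm $O(Ne^{Nv})$ by \eqref{assump1-R-2}, so Lipschitz dependence of eigenvalues keeps $\lambda(g^{-1}W[u])$ inside $\Gamma$ for $N\gg 1$.

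Case $\mathrm{(iii)'}$ is the main obstacle: now $\varrho=\varrho_\Gamma$ forces $(1,\dots,1,1-\varrho)\in\partial\Gamma$ as well, so both the leading and subleading vectors sit on the boundary and the depth argument collapses. Here I use $\alpha\varrho_\Gamma-\beta>0$ through the algebraic identity
\[
T_2=\frac{\varrho_\Gamma}{\beta}T_1-\frac{\alpha\varrho_\Gamma-\beta}{\beta}|\nabla v|^2 g,
\]
noting that in this regime $\beta<0$ (if $\alpha>0$ then $(\alpha,\dots,\alpha,\alpha-\beta)\in\partial\Gamma$ would force $\beta=\alpha\varrho_\Gamma$, contradicting $\alpha\varrho_\Gamma-\beta>0$, whence $\alpha\leq 0$ and $\beta<\alpha\varrho_\Gamma\leq 0$). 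Substituting recasts the sum of the top two terms as
\[
W[u]=N^2e^{Nv}\Big(e^{Nv}+\frac{\varrho_\Gamma}{\beta}\Big)T_1-\frac{\alpha\varrho_\Gamma-\beta}{\beta}N^2e^{Nv}|\nabla v|^2 g+Ne^{Nv}T_3+R(x,Ne^{Nv}\nabla v)+U.
\]
For $N\gg 1$ the scalar $e^{Nv}+\varrho_\Gamma/\beta$ is nonnegative, so the first tensor has eigenvalue vector in $\bar\Gamma$; the coefficient $-(\alpha\varrho_\Gamma-\beta)/\beta$ is strictly positive, so the middle term is a positive multiple of $g$ at scale $N^2e^{Nv}$, which dominates the $O(Ne^{Nv})$ remainder (using $|\nabla v|\geq\delta_0>0$) and leaves a genuinely positive-definite tensor. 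I then conclude by the monotonicity of $\Gamma$ (a consequence of $\Gamma_n\subseteq\Gamma$ together with Weyl's inequality): adding a positive-definite tensor to one whose eigenvalue vector lies in $\bar\Gamma$ produces an eigenvalue vector in the open cone $\Gamma$. This step is the delicate one, since the two tensors do not commute and the conclusion cannot be obtained by merely summing eigenvalue vectors.
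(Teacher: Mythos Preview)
Your proof is correct and follows the same strategy as the paper: set $u=e^{Nv}$, expand $W[u]$ into the three scales $N^2e^{2Nv}$, $N^2e^{Nv}$, $Ne^{Nv}$, and in case $\mathrm{(iii)'}$ use the identity rewriting $T_2$ in terms of $T_1$ plus a multiple of $g$, after checking $\beta<0$. The only cosmetic difference is in the final packaging of $\mathrm{(iii)'}$: the paper keeps \emph{half} of the term $(1-\alpha\varrho_\Gamma/\beta)N^2e^{Nv}|\nabla v|^2 g$ separate (a scalar multiple of $g$, hence commuting with the $T_1$-term, so eigenvalues add directly), and absorbs the other half with the $O(Ne^{Nv})$ remainder as a nonnegative tensor; you instead absorb the \emph{entire} $g$-term with the remainder as a positive-definite tensor and then appeal to Weyl's inequality to handle the non-commuting sum. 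Both routes are valid, and your remark about the need for Weyl here is accurate for your grouping but avoidable with the paper's.
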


\begin{proof}
	
	Let  ${u}=e^{Nv}$ be as in \eqref{admissible-metric2}.
	The straightforward computation gives  
	\begin{equation}
		\begin{aligned}
			W[u]=\,& 
			N^2 e^{Nv} (|\nabla v|^2 \cdot g -\varrho \mathrm{d}v\otimes \mathrm{d}v)
			+N^2e^{2Nv} (\alpha |\nabla v|^2 \cdot g -\beta \dd v\otimes \dd v)
			\\\,&
			+Ne^{Nv} (\Delta v \cdot g-\varrho\nabla^2v)	+R(x,Ne^{Nv}\nabla v)+U. \nonumber
		\end{aligned}
	\end{equation}
	
In the cases $(i)'$ and  $(ii)'$, we have a positive  constant $\epsilon_0$ such that $(\alpha,\cdots,\alpha,\alpha-\beta) \in    2\epsilon_0\vec{\bf1}+\Gamma$ and $(1,\cdots,1,1-\varrho)\in 2\epsilon_0\vec{\bf1}+\Gamma$, respectively. 
Accordingly, for $N\gg1$ we have  $\lambda(g^{-1}W[u])\in\Gamma$ under the assumptions \eqref{assump2-R} and \eqref{assump1-R}.


The rest is to deal with the case $(iii)'$. 
First we prove  
\begin{equation}
	\label{beta-negative1}
	\beta<0.
\end{equation}
 Suppose by contradiction that   $\beta\geq 0$. Then $\alpha\varrho_\Gamma>\beta\geq 0$, and thus $\alpha>0$. Combining with $(\alpha ,\cdots,\alpha,\alpha -\beta ) \in\partial\Gamma$, we derive $\beta=\alpha\varrho_\Gamma$, which contradicts to $\alpha\varrho_\Gamma>\beta$. 
Thus we obtain \eqref{beta-negative1}.
 Combining with $\alpha \varrho_\Gamma-\beta>0$, we see $1-\frac{\alpha \varrho_\Gamma}{\beta}>0$.
 
Similar to   the treatment in the proof of \cite[Theorem 1.2]{yuan-PUE3-construction}, we get 
	\begin{equation}
	\begin{aligned}
		W[u]=\,&  
	 N^2e^{Nv}\big(e^{Nv}+\frac{\varrho_\Gamma}{\beta}\big) (\alpha |\nabla v|^2 \cdot g -\beta \dd v\otimes \dd v)
	+(1-\frac{\alpha \varrho_\Gamma}{\beta})	N^2 e^{Nv} |\nabla v|^2 \cdot g 
	\\\,& 
		+Ne^{Nv} (\Delta v \cdot g-\varrho_\Gamma\nabla^2v)	
	  +R(x,Ne^{Nv}\nabla v)+U. \nonumber
	\end{aligned}
\end{equation} 

Under the assumption  \eqref{assump1-R}, we get \eqref{assump1-R-2}. Then
one can pick $N\gg1$ such that 
\begin{align*}
	\frac{1}{2}(1-\frac{\alpha \varrho_\Gamma}{\beta})	N^2 e^{Nv} |\nabla v|^2 \cdot g  
	+Ne^{Nv} (\Delta v \cdot g-\varrho_\Gamma\nabla^2v)	
+R(x,Ne^{Nv}\nabla v)+U\geq 0.
\end{align*}
Thus
\begin{align*}
		W[u] \geq  
	N^2e^{Nv}\big(e^{Nv}+\frac{\varrho_\Gamma}{\beta}\big) (\alpha |\nabla v|^2 \cdot g -\beta \dd v\otimes \dd v) 
	+\frac{1}{2}(1-\frac{\alpha \varrho_\Gamma}{\beta})	N^2 e^{Nv} |\nabla v|^2 \cdot g. 
\end{align*}
Thus
$\lambda(g^{-1}W[u])\in\Gamma$ if  $N\gg1$.

\end{proof}

 \section{Proof of main results}
  \label{sec-constructionofmetric}
  
 \begin{proof}
 	[Proof of Theorem \ref{thm2-Einstein1}]

 The Einstein tensor coincides with the modified Schouten tensor \eqref{conformal-formula1} with $\tau=n-1$ and $\zeta=1$.
 In this case,  $\varrho=1$,   $\alpha=\frac{n-3}{2}$ and $-\beta=1$; see \eqref{conformal-Schouten1}. 
As in \cite{yuan-PUE1}, take ${u}=e^{Nv}$, where  $v$ is a smooth function with $\mathrm{d}v\neq0$ and $v\geq1$ in $\bar M$. By Theorem \ref{main-result2}, we have a smooth conformal metric $g_u=e^{2u}g$ such that  $Ric_{g_u}-\frac{1}{2}R_{g_u}\cdot g_u>0$ in $\bar M$.
  \end{proof}

\begin{proof}
	[Proof of Theorem   \ref{thm0-main-LCF}]

 Denote $g_u=e^{2u}g$ is the desired metric. 
 Fix $x\in\bar M$. 
 Let $e_1,\cdots, e_n$ be an orthonormal basis of $T_x\bar M$ (with respect to the resulting $g_u$), and we may further assume the matrix
 $\{A_{g_u}(e_i,e_j)\}$ is diagonal at $x$.
 For $i\neq j$, let $\Sigma_{i,j}$  denote the tangent $2$-plane spanned by $e_i$ and $e_j$.
 By the decomposition \eqref{decomposion1} on the locally conformally flat manifold $(\bar M,g_u)$, we infer that 
 \begin{equation}
 	\label{formu-113}
 	\begin{aligned} 
 		K_{g_u} (\Sigma_{i,j})=A_{g_u} (e_i, e_i)
 		+A_{g_u} (e_j, e_j), \,\, \forall i\neq j.  \nonumber
 	\end{aligned}
 \end{equation} 
 From this, in order to complete the proof, it suffices to  find a  $u\in C^\infty(\bar M)$ such that
\begin{equation}
	\label{main-verification1}
	\begin{aligned}
		\lambda(-g^{-1}A_{g_u})\in\mathcal{P}_2.
	\end{aligned}
\end{equation}

To achieve this, as in Theorem \ref{main-result1} we   take ${u}=e^{Nv}$, where  $v$ is a smooth function with $\mathrm{d}v\neq0$ and $v\geq1$ in $\bar M$.
For the (minus) Schouten tensor $-A_{g_u}$, we see $\alpha=\frac{1}{2}$ and $\beta=1$; see \eqref{conformal-formula1}.
According to Theorem \ref{main-result1}, when $N\gg1$,   ${g}_u=e^{2{u}}g$ is the  desired conformal metric satisfying \eqref{main-verification1}.

\end{proof}
\bigskip


\end{document}